\newcommand{\R}{\mathbb{R}}
\newcommand{\C}{\mathbb{C}}
\newcommand{\SO}{\mathop{\mathrm{SO}}\nolimits}
\newcommand{\SU}{\mathop{\mathrm{SU}}\nolimits}
\newcommand{\PSU}{\mathop{\mathrm{PSU}}\nolimits}
\newcommand{\U}{\mathop{\mathrm{U}}\nolimits}
\newcommand{\dee}{\mathop{\! \, \rm d \!}\nolimits}
\theoremstyle{plain}
\newtheorem{claim}{\sc Claim}[section]
\newtheorem{proposition}[claim]{\sc Proposition}
\newtheorem{theorem}[claim]{\sc Theorem}
\newtheorem{conjecture}[claim]{\sc Conjecture}
\theoremstyle{remark}
\theoremstyle{definition}
\newtheorem{definition}{\sc Definition}[section]
\numberwithin{equation}{section}
\begin{document}

\title{On curves of constant torsion I}

\author{Larry M. Bates and O. Michael Melko}

\date{\today}

\begin{abstract}
We give an explicit construction of a closed curve with constant torsion and everywhere positive curvature. We also discuss the restrictions on closed curves of constant torsion when they are constrained to lie on convex surfaces.
\end{abstract}

\maketitle


\section{Introduction}\label{scIntroduction}

\noindent In a remarkable note in 1887, {\sc Koenigs} \cite{koenigs} gave a geometrical argument that a spherical curve $B(s)$, parametrized by arclength $s$, could be used to produce a curve $\gamma(s)$ of constant torsion $\tau$ by means of the integral formula
\begin{equation}\label{eqIntegralRep}
\gamma(s) = \frac{1}{\tau} \int B \times \frac{dB}{ds} \, ds.
\end{equation}
This (as will be shown in the sequel) follows immediately from the Frenet-Serret formulas if one thinks of the curve $B$ as the binormal vector of \( \gamma \).  
Subsequently, various contemporary authors, notably {\sc fouch\'e} \cite{fouche}, {\sc lyon} \cite{lyon}, and {\sc fabry} \cite{fabry}, developed this representation (attributed in \cite{lyon} to Darboux) to study more fully curves of constant torsion, especially in the algebraic case.
This study was facilitated by the observation that the integral formula (\ref{eqDarbouxRep}) could be realized in terms of a trio of arbitrary functions $h$, $k$, and $l$ in a manner analogous to the Weierstrass representation for minimal surfaces in terms of a triplet of analytic functions whose squares sum to zero.
More precisely, the representation for a curve of constant torsion is given by
\begin{equation}\label{eqDarbouxRep}
\begin{aligned}
 x & = & \frac{1}{\tau} \int \frac{l\,dk - k\,dl}{h^2 +k^2+l^2}, \\
 y & = & \frac{1}{\tau} \int \frac{h\,dl - l\,dh}{h^2 +k^2+l^2}, \\
 z & = & \frac{1}{\tau} \int \frac{k\,dh - h\,dk}{h^2 +k^2+l^2}. \\
\end{aligned}
\end{equation}
These authors present numerous examples of algebraic curves of constant torsion, but, as far as we can tell, they do not consider the problem of finding curves of this type that are closed.
Examples of such curves were found much later by {\sc calini} and {\sc ivey} \cite{calini-ivey}, who applied the method of B\"{a}cklund transformations to a generalized class of closed elastica to find new examples of curves of constant torsion.
Their work exploits the intimate connection between curves of constant torsion and surfaces of constant negative Gaussian curvature and involves some rather sophisticated machinery.
What is interesting here is that the examples they give all have curvature that changes sign.
In a parallel line of research, several authors proved related versions of the four vertex theorem and showed, in particular, that there is no simple closed curve of constant nonzero torsion on a convex surface and no closed curve of constant nonzero torsion on a sphere.

It therefore came as a pleasant surprise to us when we found an explicit construction of an unknotted closed curve of constant torsion and everywhere positive curvature.
We turn to this construction after reviewing some basic properties of the Frenet equations.
This is followed by a discussion of curves of constant torsion on convex surfaces and further notes on the literature.
Related matters will be discussed in a companion paper \cite{bates-melko} in preparation by the authors.

\subsection{The Frenet equations, signed curvature, and the Darboux representation}

We review the Frenet equations (or Frenet-Serret formulas), first of all, to establish some notation conventions.
Furthermore, as many authors (e.g. {\sc Stoker} \cite{stoker}) claim that curves in \( \R^3 \) have positive curvature by definition, we use this opportunity to clarify what we mean by curvature that changes sign.

By means of parallel translation, we can identify a map 
$F : [0, L] \rightarrow \SO(3)$
from the closed interval $[0, L]$ into the special orthogonal group with a moving frame along $\gamma$.
We refer to such a map as a \emph{framing} of $\gamma$, and we say that the pair $(\gamma, F)$ is a \emph{framed curve}.
By convention, we write $F = (f_1, f_2, f_3)$, where $f_1$, $f_2$, and $f_3$ are the column vectors of $F$.
Suppose further that $A : [0, L] \rightarrow \mathfrak{so}(3)$ is a map from the interval $[0, L]$ into the Lie algebra $\mathfrak{so}(3)$ of skew-symmetric matrices.
The fundamental theorem of ordinary differential equations then guarantees that the system defined by the equations $\gamma^\prime = f_1$ and $F^{\prime} = F A$ has a solution that is unique up to a choice of initial conditions $\gamma(0) = \gamma_0$ and $F(0) = F_0$.
It follows immediately that $f_1 = T$ is the unit tangent field of $\gamma$ and that $\gamma$ is parametrized by arclength.
Suppose now that $\kappa$ and $\tau$ are continuous functions on $[0, L]$, which we assume to be at least piecewise continuously differentiable.
Then, in the special case that $A$ has the form given in (\ref{eqFrenetMatrixForm}), we say that $F$ is the \emph{Frenet frame} of $\gamma$, and that $f_2 = N$ and $f_3 = B$ are the \emph{principal normal} and \emph{binormal}, respectively, of the curve.
Furthermore, we refer to the system
\begin{equation}\label{eqFrenetMatrixForm}
\gamma^{\prime} = T, \qquad
F^{\prime} = F\,A, \qquad
A = \begin{pmatrix}
		0		&	-\kappa &	0 \\
		\kappa	&	0		&	-\tau \\
		0		&	\tau 	&	0
	\end{pmatrix}.
\end{equation}
as the \emph{Frenet equations} associated to $\gamma$.
Note that the Frenet equations are usually given as
\begin{equation}\label{eqFrenetStandard}
T^{\prime} = \kappa\,N, \qquad
T^{\prime} = -\kappa\,T + \tau\,B, \qquad
B^{\prime} = -\tau\,N.
\end{equation}
The matrix form of these equations is $F^{\prime} = F\,A$, which we have augmented with the \emph{tangent equation} $\gamma^{\prime} = T$ for convenience.

We recall that the pairs $(T, N)$, $(N, B)$, and $(B, T)$ span, respectively, the \emph{osculating plane}, the \emph{normal plane}, and the \emph{rectifying plane} of $\gamma$ for a given value of $s$.
The \emph{curvature} $\kappa$ measures the rate at which $\gamma$ bends in the osculating plane and the \emph{torsion} $\tau$ measures the rate at which the normal plane rotates about $T$.
We say that the curvature function $\kappa : [0,L] \rightarrow \R^3$ is \emph{admissible} if, in addition to the above continuity assumptions, it has only isolated zeros and if $\kappa^\prime(s)$ exists and is nonzero whenever $\kappa(s) = 0$.
We assume throughout that $\kappa$ is admissible.
The curve $\gamma$ is uniquely determined, up to translation, by the  Frenet frame $F$, and, conversely, $F$ is uniquely determined by $\gamma$ if the curvature $\kappa$ is admissible.
We will say that $\kappa$ is \emph{signed} if it is admissible and vanishes at least once in the open interval $(0, L)$.

The curvature of $\gamma$ is often defined as $\kappa = \|T^\prime\|$, where $\| \ \| $ denotes the Euclidean norm.
It is argued that, although signed curvature can be defined for plane curves, no intrinsic meaning can be given to the sign of curvature for curves in $\R^3$ since a plane curve in $\R^3$ can be mapped by a rigid motion onto a congruent curve with (planar) curvature of opposite sign.
We hold, however, that the \emph{transition} of curvature from positive to negative values (or vice-versa) is a real geometric property of a space curve: the introduction of signed curvature only weakens somewhat the uniqueness of $\kappa$ given $\gamma$ (up to rigid motion).
Providing $\gamma$ with a framing (that we always suppose to be consistent with the standard orientation of $\R^3$) removes this ambiguity.

Note that if $\kappa$ is signed, then $\kappa(s_0) = 0$ if and only if $\gamma$ passes through its rectifying plane $(T, B)$ at $\gamma(s_0)$.
Informally, a curve of constant torsion and slowly varying curvature will look roughly like a helix for which a directrix (i.e. an axis) can be defined.
A rapid change of sign in the curvature then results in a change of direction of the curve's directrix, causing it to move to the opposite side of the rectifying plane where $\kappa$ vanishes.
It was a long standing belief of one of the authors that this sort of behavior is necessary in order to obtain a closed curve of constant torsion.
We provide a counterexample to this hypothesis in the Section \ref{scSpace}.

It is sometimes useful to consider different framings of a curve, so we point out here how two framings are related.
Suppose that $\hat{F}$ is another framing of $\gamma$.
Then $\hat{F}$ is related to $F$ by the rule $\hat{F} = F\,g^{-1}$ for some map $g:[0, L] \rightarrow \SO(3)$.
One easily verifies that $\hat{F}^{\prime} = \hat{F}\,\hat{A}$, where $\hat{A}= -g\,g^{-1} + g\,A\,g^{-1}$, and that the tangent equation becomes $\gamma^{\prime} = \hat{f}_1\,g$.
This transformation can be thought of as a \emph{change of gauge}, where $g^{-1}$ is used instead of $g$ to make the map $F \rightarrow F\,g^{-1}$ a left action, and we see that it results in a system of equations equivalent to (\ref{eqFrenetMatrixForm}), giving rise to the same curve $\gamma$ as a solution.
Of particular interest is the Darboux frame, which is a framing that is adapted to a surface containing the curve.
Its precise definition and properties will be discussed in Section \ref{scOvaloids}.

To close this section, we indicate how the \emph{Darboux representation} (\ref{eqDarbouxRep}) for a curve of nonzero constant torsion is derived from (\ref{eqFrenetStandard}), and then we prove a simple, but fundamental, proposition.
Regarding the former, one simply substitutes $N = -B^{\prime} / \tau$ into the equation $T = N \times B$ and integrates to obtain (\ref{eqIntegralRep}).
Then (\ref{eqDarbouxRep}) is obtained by substituting $B = (h,k,l)/(h^2 + k^2 + l^2)$ for an arbitrary choice of functions $h$, $k$, and $l$.

\begin{proposition}\label{pKappaBgKappa}
Suppose that $B$ is a curve on the unit sphere parametrized by arclength, and suppose that $\gamma$ and $B$ are related by (\ref{eqIntegralRep}), where $\tau$ is a nonzero constant.
If $\kappa^B_g$ denotes the geodesic curvature of $B$, then $\kappa^B_g = \tau\,\kappa$, where $\kappa$ and $\tau$ denote the curvature and torsion of $\gamma$ respectively.
\end{proposition}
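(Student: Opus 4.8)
The plan is to differentiate the representation (\ref{eqIntegralRep}) twice and then read the Frenet data of $\gamma$ directly off the spherical geometry of $B$. Write $\,'=d/ds$ for the arclength derivative of $B$. Differentiating (\ref{eqIntegralRep}) gives $\gamma'=\tau^{-1}\,B\times B'$, and since $B$ is a unit-speed curve on the unit sphere we have $B\cdot B'=0$ and $|B\times B'|=1$. Hence $\gamma$ has constant speed $|\tau|^{-1}$, its unit tangent is $T=\sgn(\tau)\,B\times B'$, and the orthonormal triple $(B',\,B\times B',\,B)$ is precisely the Darboux frame of $B$ regarded as a curve on the unit sphere: $B'$ is the tangent, $B\times B'$ the conormal, and $B$ the outward surface normal.

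I would next bring in the structure equations of that Darboux frame. Because the unit sphere is totally umbilic with normal curvature $-1$ and vanishing geodesic torsion, these read $(B')'=\kappa^B_g\,(B\times B')-B$ and $(B\times B')'=-\kappa^B_g\,B'$, where equivalently $\kappa^B_g=\det(B,B',B'')$. Differentiating $\gamma'=\tau^{-1}B\times B'$ once more gives $\gamma''=\tau^{-1}\,B\times B''$; substituting the first structure equation and using the triple-product identity $B\times(B\times B')=-B'$ collapses this to $\gamma''=-\tau^{-1}\kappa^B_g\,B'$, whence, by $(u\times v)\times(u\times w)=\det(u,v,w)\,u$, one obtains $\gamma'\times\gamma''=\tau^{-2}(B\times B')\times(B\times B'')=\tau^{-2}\kappa^B_g\,B$. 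In particular the binormal of $\gamma$ is $\pm B$, confirming that $B$ is genuinely the binormal indicatrix of $\gamma$ and not merely some unit field orthogonal to $T$.

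Finally I would match this against the Frenet system (\ref{eqFrenetMatrixForm})--(\ref{eqFrenetStandard}). Since $\gamma$ has the constant speed $|\tau|^{-1}$, passing to its own arclength parameter $\sigma$, for which $d/d\sigma=|\tau|\,d/ds$, puts the moving frame $(T,N,B)$ into the standard form $F'=FA$ of (\ref{eqFrenetMatrixForm}) --- this is just the change of gauge recorded earlier. Reading $dT/d\sigma=\kappa N$ and $dB/d\sigma=-\tau N$ off the expressions already found for $\gamma''$ and $B'$ then does three things at once: it shows that the torsion of $\gamma$ is the prescribed constant $\tau$, it identifies the principal normal $N$ (up to the sign fixed by our orientation convention) with $-B'$, and it delivers the identity $\kappa^B_g=\tau\kappa$ relating the geodesic curvature of $B$ to the curvature of $\gamma$. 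I expect the only real obstacle to be bookkeeping: keeping the two arclength parameters --- that of $B$ and that of $\gamma$, which differ by the factor $|\tau|$ --- consistently apart, and tracking the sign of $\tau$ through the identifications; the vector algebra itself is immediate from the triple-product identities above.
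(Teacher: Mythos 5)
Your spherical set-up (the Darboux frame $(B',\,B\times B',\,B)$, the structure equations on the unit sphere, $\gamma''=-\tau^{-1}\kappa^B_g\,B'$, the identification of the binormal of $\gamma$ with $\pm B$, and the constancy of the torsion) is all correct, and it is a genuinely different route from the paper's. But the one step you do not actually carry out --- ``reading off'' $\kappa^B_g=\tau\,\kappa$ at the end --- is precisely where the argument fails as written. Push your own formulas through: with $\sigma$ the arclength of $\gamma$ and $s$ that of $B$, you have $d/d\sigma=|\tau|\,d/ds$, $T=\sgn(\tau)\,B\times B'$ and $(B\times B')'=-\kappa^B_g\,B'$, hence $dT/d\sigma=-\tau\,\kappa^B_g\,B'=\tau\,\kappa^B_g\,N$ for the normal $N=-B'$, while $T\times N=\sgn(\tau)\,B$ and $d(\sgn(\tau)B)/d\sigma=\tau B'=-\tau N$. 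So your computation proves $\kappa=\tau\,\kappa^B_g$, i.e.\ $\kappa^B_g=\kappa/\tau$, which is the reciprocal normalization of the claimed identity; the two agree only when $\tau=\pm1$. The ``bookkeeping'' between the two arclength parameters that you flag as the only obstacle is exactly where the factor goes wrong, and asserting the displayed identity does not discharge it.

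The discrepancy is a factor $\tau^{2}=\|dB/ds\|^{2}$ reflecting which parameter the geodesic curvature is measured in, and this is also where your reading diverges from the paper's proof. The paper works throughout in the arclength parameter of $\gamma$, regards $B$ as its binormal (so $B$ has speed $|\tau|$ in that parameter), writes $T=B\times B'/\tau$, sets $\kappa^B_g=\langle B'',T\rangle$, and computes $B''=(-\tau N)'=\kappa\tau\,T-\tau^{2}B$ directly from (\ref{eqFrenetStandard}), giving $\kappa^B_g=\kappa\tau$ in two lines; note that this convention and the hypothesis that $B$ is unit-speed are simultaneously consistent only when $|\tau|=1$, which is the case in the paper's examples. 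To make your write-up correct you must either adopt the paper's convention explicitly (measuring the turning of $B$ in $\gamma$'s parameter, in which case the Darboux-frame machinery is not needed) or state your conclusion honestly as $\kappa^B_g=\kappa/\tau$. Either version yields what is actually used later --- for constant $\tau>0$ the zeros and signs of $\kappa$ and $\kappa^B_g$ coincide, so positive geodesic curvature of $B$ gives positive curvature of $\gamma$ --- but as it stands your proposal establishes $\kappa=\tau\,\kappa^B_g$, not the stated $\kappa^B_g=\tau\,\kappa$.
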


\begin{proof}
The unit tangent to $\gamma$ is given by $T = B \times B^\prime / \tau$.
It follows that $T\perp B$ and $T \perp B^\prime$, from which we conclude that $\kappa^B_g = \langle B^{\prime\prime}, T \rangle$, where $\langle\ ,\ \rangle$ denotes the Euclidean inner product on $\R^3$.
However, by (\ref{eqFrenetStandard}), we have that $B^{\prime\prime} = \left(-\tau\,N \right)^\prime = \kappa\,\tau\,T - \tau^2\, B$, and hence that $\kappa^B_g =  \langle\kappa\,\tau\,T + \tau^2\, B, T \rangle = \kappa\,\tau$.
\end{proof}

\section{Curves of constant torsion in space}\label{scSpace}

In this section, we use the Darboux representation (\ref{eqDarbouxRep}) to construct examples of closed curves of constant torsion in $\R^3$.
The basis of our construction is the introduction of a two-parameter family of spherical curves $B = (h, k, l)$, where $h$, $k$, and $l$ are certain trigonometric polynomials satisfying $h^2 + k^2 + l^2 = 1$.
Closed curves of constant torsion are then obtained by finding parameter values that make the integrals in (\ref{eqDarbouxRep}) vanish.

\subsection{A two-parameter family of spherical epicycles}

We begin with an illustrative example for a planar curve to motivate our approach.
By Stokes' theorem, the planar curve 
\begin{equation}\label{eqPlaneCurve}
\rho(t) = (x(t), y(t)) = (a\cos t + b\cos(-2t), a\sin t + b \sin(-2t) )
\end{equation}
encloses the oriented area
$$
\frac12\int_0^{2\pi}(x\,y' -y\,x')dt
	= \frac12\int_{\rho_*} x\,dy -y\,dx = \pi(2 b^2 - a^2),
$$
where $\rho_*$ denotes the trace of $\rho$ and is viewed as an oriented 1-cycle in $\R^2$.
This area integral clearly vanishes when $b = a/\sqrt{2}$. 
What is interesting here is that the curvature of $\rho$ never vanishes.
Kinematically, we may think of $\rho$ as describing the clockwise circular motion of a point about the counterclockwise circular motion of another point, where $b$ and $a$ are the corresponding radii. 
Geometrically, the integral vanishes because $\rho_*$ decomposes into four simple closed cycles enclosing oriented areas that cancel out when $b = a/\sqrt{2}$.
The fact that the curvature is not zero is related to the fact that this curve has winding number $3$, i.e., $\rho$ does not possess a regular isotopy to the identity map of the unit circle $S^1$.
On the other hand, if a curve possesses such an isotopy and encloses vanishing area, it must intersect itself in such a way as to produce inflection points, i.e., the corresponding curvature function is signed.

We now wish to adapt the kinematic interpretation of (\ref{eqPlaneCurve}) to the unit sphere.
To this end, we choose the vector $U = (1/\sqrt{3},1/\sqrt{3},1/\sqrt{3})^\mathrm{t}$ as our central point, and we seek a curve whose trace is invariant under a rotation of $2 \pi / 3$ about the axis defined by this point.
This will ensure that
\begin{equation}\label{eqSimpleIntegrals}
\int x\, dy-y\,dx = \int y\, dz -z\, dy = \int z\, dx - x\, dz,
\end{equation}
so we will only need to find parameter values that make one integral (and hence all three integrals) vanish.

In what follows, we use $(F)_k$ to denote the $k$-th column of a frame $F$, and we extend $U$ to an orthonormal frame $C$ by setting $V= (-1/\sqrt{6}, -1/\sqrt{6},2/\sqrt{6})^\mathrm {t}$, $W= U\times V = (1/\sqrt{2}, -1/\sqrt{2},0)^\mathrm{t}$, and $C = (U,V,W)$.
We will refer to $C$ as the \emph{central frame}.
Now define $R_t$ and $S_t$ by
$$
R_t = \begin{pmatrix}
	1 & 0 & 0 \\
	0 & \cos t & -\sin t \\
	0 & \sin t & \cos t 
\end{pmatrix}, \qquad
S_t = \begin{pmatrix}
	\cos t & -\sin t & 0 \\
	\sin t & \cos t & 0 \\
	0 & 0 & 1
\end{pmatrix}.
$$
It follows that the right multiplications $F \rightarrow F\,R_t$ and $F \rightarrow F\,S_t$ rotate $F$ about its first and third column, respectively, by an angle $t$ in the right handed sense.
We further define the orthogonal matrix $Q_t$ by the formula (cf. \cite{cushman-bates} for example)
$$
Q_t = I + \sin t\, \bar{U} + (1-\cos t)\,\bar{U}^2,
$$
where
$$
\bar{U} = \begin{pmatrix} 0 & -1/\sqrt{3} & 1/\sqrt{3} \\
                         1/\sqrt{3} & 0 &  -1/\sqrt{3} \\
                         -1/\sqrt{3} & 1/\sqrt{3} & 0 
          \end{pmatrix}
$$
is the antisymmetric matrix associated to the vector $U$.
We then have that the left multiplication $C \rightarrow Q_t\,C$ rotates $C$ about $U$ by an angle $t$.
\begin{definition}
We define the spherical $(m, n)$-epicycle with radii $(\alpha, \beta)$ to be the curve $B$ given by the rule
\begin{equation}\label{eqSphericalCurve}
B(t) = (Q_{m\,t}\,C\,S_\alpha\,R_{n\,t}\,S_\beta)_1.
\end{equation}
\end{definition}
To motivate this definition, note first that $(C\,S_\alpha)_1$ is the vector $U$ rotated about $W$ through an angle $\alpha$.
It follows, for $m \neq 0$, that $(Q_{m\,t}\,C\,S_\alpha)_1$ traces out a circle of geodesic radius $\alpha$ and center $U$ on the sphere $m$ times for $0 \leq t \leq 2\pi$.
To understand the epicyclic behavior, let us first define $U(t,\alpha) = (Q_{m\,t}\,C\,S_\alpha)_1$, $V(t,\alpha) = (Q_{m\,t}\,C\,S_\alpha)_2$, and $W(t,\alpha) = (Q_{m\,t}\,C\,S_\alpha)_3$.
From our previous definitions, it is clear that right multiplication by $R_{n\,t}$ rotates the plane orthogonal to $U(t,\alpha)$ through an angle of $n\,t$.
Let $\widehat{W}(t,\alpha)$ denote the image of $W(t,\alpha)$ under this rotation.
Now, right multiplication by $S_\beta$ rotates the plane orthogonal to $\widehat{W}(t,\alpha)$, moving $U(t,\alpha)$ by an angle $\beta$.
Thus $B$ moves about a circle of geodesic radius $\beta$ the center of which traces out the circle $U(t, \alpha)$.

Note that, for any choice of $(m, n)$, we have a family of curves that depend on the two parameters $(\alpha, \beta)$.
These curves will have the desired symmetry property if $n$ is a multiple of $3$.
In this case, we expect to find a one-parameter family of epicycles
for which the integrals (\ref{eqSimpleIntegrals}) vanish.

\subsection{Two examples of closed curves of constant torsion}

We will now discuss two examples of interest and use them as the basis for a conjecture.
Note that, since the components of $B$ in (\ref{eqSphericalCurve}) are trigonometric polynomials, the integral equation relating $\alpha$ and $\beta$
can be evaluated, but the result is unwieldy, in general.
We therefore only state formulas for the special cases of $\alpha$ and $\beta$ that are of interest.
Similarly, the curve $\gamma$ given by (\ref{eqIntegralRep}) can be exactly determined, but this formula is too complex to merit presentation.
The torsion of $\gamma$ in both examples is $\tau = 1$.  Positive torsion means that the curves twist from left to right, and are said to be {\it dextrorse}.  It is then trivial to produce similar examples with torsion $\tau = -1$, which twist form right to left, and are said to be {\it sinistrorse}.

For our first example, we set $m = 1$ and $n = -3$ in (\ref{eqSphericalCurve}).
This produces a spherical analog to (\ref{eqPlaneCurve}), and the corresponding projected area integral for $B$ is given by
$$
\frac{\sqrt{3}}{\pi}\int_0^{2\pi}(x\,y' -y\,x')dt
	= 2\sin^2\alpha\,\cos^2\beta
	+ \left(\cos^2\alpha -6\cos\alpha + 1 \right) \sin^2\beta.
$$
When $\alpha = \pi/4$, we find that this integral vanishes if
$$
\beta = \frac{1}{2}\cos^{-1}\left(\frac{67 - 24 \sqrt{2}}{71}\right).
$$
These parameter values produce the spherical curve of positive geodesic curvature shown in Figure \ref{gSPhericalFloret}.
By virtue of Proposition \ref{pKappaBgKappa}, the corresponding curve of constant torsion, shown in Figure \ref{gThreeLobedUnknottedPositiveK}, has everywhere positive curvature.

\begin{figure}[htb]
\setlength\fboxsep{0pt}
\setlength\fboxrule{0pt}
\fbox{
    \includegraphics[trim = 30mm 30mm 30mm 30mm, clip, scale=0.75]
        {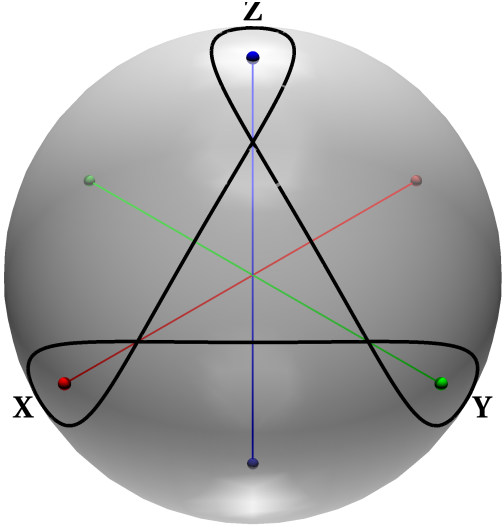}
}
\caption{
A spherical $(1, -3)$-epicycle with everywhere positive geodesic curvature.
}\label{gSPhericalFloret}
\end{figure}

\begin{figure}[htb]
\setlength\fboxsep{0pt}
\setlength\fboxrule{0pt}
\fbox{
    \includegraphics[trim = 0mm 30mm 20mm 50mm, clip, scale=0.5]
        {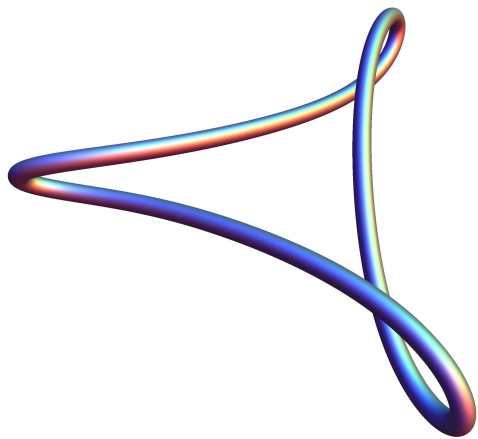}
}
\caption{
The unknotted curve of constant torsion and everywhere positive curvature corresponding to the curve in Figure \ref{gSPhericalFloret}.
}\label{gThreeLobedUnknottedPositiveK}
\end{figure}

Next, we consider the case when $m = 2$ and $n = -3$, for which  we obtain the area integral
$$
\frac{\sqrt{3}}{2\pi}\int_0^{2\pi}(x\,y' -y\,x')dt
	= 2\sin^2\alpha\,\cos^2\beta
	+ \left(1 - 3\cos\alpha +  \cos^2\alpha  \right) \sin^2\beta.
$$
When $\alpha = \pi/3$, we find that this integral vanishes if
$$
\beta = \frac{1}{2}\cos^{-1}\left(-\frac{5}{7}\right).
$$
This case produces a spherical curve of \emph{signed geodesic curvature}, which is shown in Figure \ref{gSPhericalFloret-p2m3}.
Here, the definition of signed geodesic curvature is the obvious analog for curves lying in surfaces of the definition of signed curvature given in Section \ref{scIntroduction}.
The corresponding curve of constant torsion is shown in Figure \ref{gTrefoilKnot}.

\begin{figure}[htb]
\setlength\fboxsep{0pt}
\setlength\fboxrule{0pt}
\fbox{
    \includegraphics[trim = 0mm 0mm 0mm 0mm, clip, scale=0.5]
        {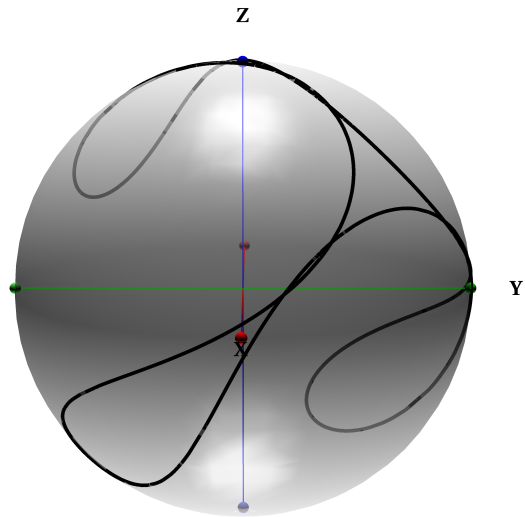}
}
\caption{
A spherical $(2, -3)$-epicycle with signed geodesic curvature.
This view and the symmetry of $B$ shows that it has six inflection points.
}\label{gSPhericalFloret-p2m3}
\end{figure}

\begin{figure}[htb]
\setlength\fboxsep{0pt}
\setlength\fboxrule{0pt}
\fbox{
    \includegraphics[trim = 0mm 30mm 20mm 50mm, clip, scale=0.5]
        {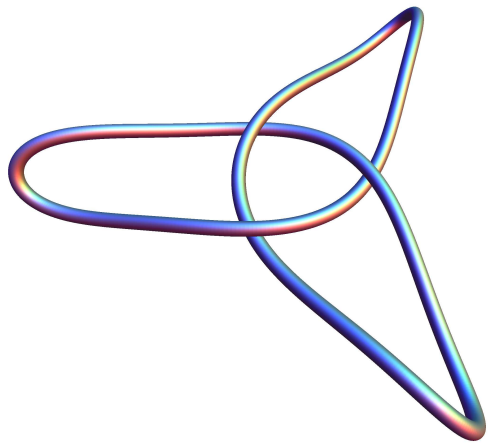}
}
\caption{
The trefoil knot of constant torsion and signed curvature corresponding to the curve in Figure \ref{gSPhericalFloret-p2m3}.
}\label{gTrefoilKnot}
\end{figure}

A comparison of Figures \ref{gSPhericalFloret-p2m3} and \ref{gTrefoilKnot} shows a clear correspondence between the inflection points of $B$ in Figure \ref{gSPhericalFloret-p2m3} and the positions of the points in Figure \ref{gTrefoilKnot} where the curve $\gamma$ passes through its rectifying plane.
The latter lead to the crossings that one requires in a knotted curve.
Hence, these pictures suggest the following.
\begin{conjecture}\label{cNoPositiveKnots}
Closed, knotted curves of constant torsion must have signed curvature.
\end{conjecture}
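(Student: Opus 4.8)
\noindent\textbf{Toward a proof of Conjecture~\ref{cNoPositiveKnots}.}
Passing to the contrapositive, let $\gamma$ be a closed curve of nonzero constant torsion with everywhere positive curvature, normalize $\tau = 1$, and aim to show $\gamma$ is unknotted. By the Darboux construction (\ref{eqIntegralRep}), $\gamma$ is recovered from its binormal indicatrix $B$, which is an $L$-periodic curve on the unit sphere $S^2$; since $B^{\prime} = -N$ it is parametrized by arclength, and by Proposition \ref{pKappaBgKappa} the hypothesis $\kappa > 0$ says exactly that the geodesic curvature of $B$ is everywhere positive, i.e.\ that $B$ is \emph{locally convex}. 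Moreover the closure condition $\gamma(L) = \gamma(0)$ is precisely the vanishing of the vector area $\int_0^L B \times B^{\prime}\, ds = 0$. Conversely, any closed locally convex spherical curve of zero vector area is, via (\ref{eqIntegralRep}), the binormal indicatrix of such a $\gamma$. The theorem is thus equivalent to the purely spherical statement: \emph{the space curve reconstructed from a closed locally convex spherical curve of zero vector area is unknotted.}

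A first step is the observation that, under these hypotheses, $B$ cannot be \emph{simple}. Indeed, a simple closed locally convex spherical curve bounds a geodesically convex disk $D \subset S^2$ --- the spherical analogue of the fact that a simple plane curve of nonvanishing curvature is convex --- and $D$, not being a closed hemisphere, lies in an open hemisphere $\{\langle \mathbf{r}, \mathbf{e}\rangle > 0\}$. Since by Stokes' theorem the vector area $\tfrac12\int_0^L B \times B^{\prime}\, ds$ equals $\pm\int_D \mathbf{r}\, dA$, pairing with $\mathbf{e}$ gives $\pm\int_D \langle \mathbf{r}, \mathbf{e}\rangle\, dA \neq 0$, contradicting closure. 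Hence $B$ is a non-simple closed locally convex curve --- heuristically a rosette of the sort produced in Section \ref{scSpace} with $m = 1$. The spherical $(1,-3)$-epicycle of Figure \ref{gSPhericalFloret}, whose reconstructed curve (Figure \ref{gThreeLobedUnknottedPositiveK}) is visibly an unknot, is the basic model, and the conjecture asserts that this behaviour is universal in this class.

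The natural route to the topological conclusion is a deformation argument. Let $\mathcal{B}$ be the space of closed locally convex spherical curves of vanishing vector area. Because the reconstruction (\ref{eqIntegralRep}) depends continuously on $B$, a path in $\mathcal{B}$ yields a one-parameter family of space curves $\gamma_t$, and the plan is to show this family can be taken to consist of embeddings --- here positivity of $\kappa$ should prevent the appearance of self-intersections and self-tangencies --- so that a path in $\mathcal{B}$ induces an ambient isotopy of the associated curves. One would then exhibit in each connected component of $\mathcal{B}$ a normal form, expected to be an epicycle of the above type, check directly that the normal forms give unknots, and conclude. The geometric heuristic is the correspondence --- plainly visible on comparing Figures \ref{gSPhericalFloret-p2m3} and \ref{gTrefoilKnot} --- between the inflection points of $B$ and the points where $\gamma$ passes through its rectifying plane, the latter being the source of the essential crossings in a knot diagram of $\gamma$: absent inflections of $B$, a regular projection of $\gamma$ ought to have too few essential crossings to encode a nontrivial knot.

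The main obstacle is precisely the interlocking of these last two ingredients: classifying, up to the relevant notion of homotopy, the closed locally convex spherical curves of zero vector area, and at the same time showing that the reconstruction (\ref{eqIntegralRep}) never forces a self-intersection of $\gamma$ --- equivalently, bounding the crossing number of regular projections of $\gamma$ in terms of the number of geodesic inflections of $B$, which is zero here. This appears to be the crux, and it may well require the correspondence between curves of constant torsion and pseudospherical surfaces exploited by Calini and Ivey \cite{calini-ivey}, to be taken up in the companion paper \cite{bates-melko}. Even partial results --- for instance, that a constant-torsion curve with $\kappa > 0$ always lies in a standardly embedded solid torus, or that its self-linking number relative to the Frenet framing is constrained --- would already be of interest.
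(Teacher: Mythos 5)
The statement you were asked about is precisely Conjecture~\ref{cNoPositiveKnots}: the paper does not prove it. Its only evidence is the pair of examples in Section~\ref{scSpace} and the visual correspondence between Figures~\ref{gSPhericalFloret-p2m3} and~\ref{gTrefoilKnot}; the authors themselves, and also \cite{weiner}, only exhibit closed constant-torsion curves with positive curvature that happen to be unknotted. So there is no proof in the paper to compare yours against, and your text, as you candidly say, is a program rather than a proof. The preliminary reductions you make are correct and worth recording: with $\tau=1$ the binormal indicatrix $B$ is an arclength-parametrized spherical curve, Proposition~\ref{pKappaBgKappa} translates $\kappa>0$ into positive geodesic curvature of $B$, closure of $\gamma$ is exactly the vanishing of $\int_0^L B\times B^{\prime}\,ds$, and your hemisphere argument correctly shows $B$ cannot be simple (this is consistent with, and essentially a shadow of, the nonexistence of closed spherical curves of constant nonzero torsion and Theorem~\ref{tSimpleCurve}).

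The genuine gaps are the two steps you flag, and they are not technicalities but the entire content of the conjecture. First, nothing in your setup shows that the curves $\gamma_t$ reconstructed along a path in your space $\mathcal{B}$ remain embedded: positivity of $\kappa$ does not by itself exclude self-intersections (the reconstruction~(\ref{eqIntegralRep}) imposes no embeddedness), and knot type changes exactly at the parameters where embeddedness fails, so ``positivity prevents self-tangencies'' would need a real argument, none of which is sketched. Second, the classification of connected components of $\mathcal{B}$ --- locally convex spherical curves subject to the nonlinear constraint of vanishing vector area --- is not established, and no reason is given that each component contains an epicyclic normal form; the known classification of locally convex spherical curves does not incorporate the area constraint. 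Finally, the heuristic that essential crossings of a diagram of $\gamma$ must arise from passages through the rectifying plane (i.e.\ from inflections of $B$) is only suggestive: a projection of a positive-curvature space curve can certainly have crossings without any sign change of $\kappa$, so a crossing-number bound in terms of geodesic inflections of $B$ would itself be a new theorem. As it stands, your proposal reformulates the conjecture as a statement about locally convex spherical curves of zero vector area --- a reasonable and possibly useful reformulation, in the spirit of the B\"acklund/pseudospherical machinery of \cite{calini-ivey} and the announced sequel \cite{bates-melko} --- but it does not prove the statement, which remains open.
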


\section{Curves of constant torsion on ovaloids}\label{scOvaloids}

We define an \emph{ovaloid} to be a surface in $\R^3$ that is strictly convex and geodesically complete.
Such a surface is characterized by having the topology of a sphere and strictly positive Gaussian curvature.
In this section, we first give an explicit characterization of curves of constant torsion on the sphere and conclude that closed curves of this type do not exist.
We then discuss the problem of extending this nonexistence result to the general ovaloid.

\subsection{Known results for spherical curves}

It has long been known that the curvature $\kappa$ and torsion $\tau$ of a spherical curve are related by the differential equation
\begin{equation}\label{eqKappaTauODE}
\frac{\tau}{\kappa} - \left(\frac{\kappa^{\prime}}{\kappa^2\tau}\right)^{\prime} = 0.
\end{equation}
Here the differentiation is with respect to the arclength parameter $s$.
One may find this result in relatively recent books on differential geometry, such as \cite{goetz, guggenheimer}, as well as \cite{eisenhart}.
From this follow several interesting results, such as the fact that a closed spherical curve $\gamma$ satisfies the integral relations 
\begin{equation}\label{eqIntegralIdentities}
I_n = \int_\gamma \kappa^n\tau \,\dee s = 0, \quad n= 0,\pm 1,\pm 2,\dots
\end{equation}
This is proven in {\sc Saban} \cite{saban}, who goes on to show a sort of converse: if the integral $I_n$ vanishes for a fixed but arbitrary integer $n \geq 0$ and for all closed curves $\gamma$ on a surface $M$, then $M$ is necessarily either a plane or a sphere.
Breuer and Gottlieb \cite{breuer-gottlieb} were able to integrate (\ref{eqKappaTauODE}), and Wong \cite{wong} formulated their result as follows.
\begin{theorem}
A $C^4$ curve $\gamma(s)$, parametrized by its arclength $s$, with curvature $\kappa$ and torsion $\tau$ is a spherical curve if and only if 
\begin{equation}\label{eqWong} 
\left(A \cos\int_0^s \tau \,\dee \sigma + B\sin\int_0^s \tau\,\dee \sigma\right)\kappa(s) = 1,
\end{equation}
where $A$ and $B$ are constants.  Moreover, the curve lies on a sphere of radius $\sqrt{A^2+B^2}.$
\end{theorem}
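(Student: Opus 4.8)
The plan is to characterize sphericity by differentiating the defining relation of a sphere against the Frenet apparatus (\ref{eqFrenetStandard}) and then integrating the resulting ordinary differential equation. Fix a point $p\in\R^3$ and set $m(s)=\gamma(s)-p$; the curve lies on the sphere of radius $r$ centered at $p$ precisely when $\langle m,m\rangle\equiv r^2$. Differentiating this identity and repeatedly substituting the Frenet equations, one obtains in turn $\langle m,T\rangle=0$, then (using $T'=\kappa N$) $\langle m,N\rangle=-1/\kappa$, and then (using $N'=-\kappa T+\tau B$ together with $\langle m,T\rangle=0$) $\langle m,B\rangle=\kappa'/(\kappa^2\tau)$. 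Hence $m=-\tfrac1\kappa N+\tfrac{\kappa'}{\kappa^2\tau}B$ and $\|m\|^2=\tfrac1{\kappa^2}+\bigl(\tfrac{\kappa'}{\kappa^2\tau}\bigr)^2$. The hypothesis $\gamma\in C^4$ is exactly what permits these differentiations and, below, one further differentiation of $\kappa'/(\kappa^2\tau)$; throughout one uses the standing assumption, implicit in (\ref{eqWong}) and (\ref{eqKappaTauODE}), that $\kappa$ and $\tau$ are nowhere zero.

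Next I would make the logic symmetric by introducing the \emph{candidate center} $c(s)=\gamma(s)+\tfrac1\kappa N-\tfrac{\kappa'}{\kappa^2\tau}B$ before assuming anything. A direct computation with (\ref{eqFrenetStandard}) shows (using $\tfrac1\kappa\cdot\kappa=1$) that the $N$-component of $c'$ vanishes identically, while its $B$-component equals $\tfrac\tau\kappa-\bigl(\tfrac{\kappa'}{\kappa^2\tau}\bigr)'$, which is the left-hand side of (\ref{eqKappaTauODE}). Therefore $\gamma$ is a spherical curve if and only if $c$ is constant, if and only if (\ref{eqKappaTauODE}) holds; and when it does, $c$ is the center and $r^2=\|\gamma-c\|^2=\tfrac1{\kappa^2}+\bigl(\tfrac{\kappa'}{\kappa^2\tau}\bigr)^2$. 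This recovers the classical differential equation (\ref{eqKappaTauODE}) for spherical curves as the intermediate step.

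It remains to integrate (\ref{eqKappaTauODE}) into the closed form (\ref{eqWong}). Write $\theta(s)=\int_0^s\tau\,\dee\sigma$, $u=1/\kappa$, and $v=\kappa'/(\kappa^2\tau)$, so that $v=-u'/\tau$ by definition while (\ref{eqKappaTauODE}) asserts $v'=\tau u$. Then $(u+iv)'=i\tau\,(u+iv)$, whose solution is $u+iv=(A-iB)\,e^{i\theta}$ for real constants $A,B$ fixed by the value at $s=0$. Taking real parts gives $1/\kappa=u=A\cos\theta+B\sin\theta$, which is (\ref{eqWong}), and $u^2+v^2=A^2+B^2$, so the sphere has radius $\sqrt{A^2+B^2}$. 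For the converse, if (\ref{eqWong}) holds one sets $u=1/\kappa=A\cos\theta+B\sin\theta$ and $v=-u'/\tau=A\sin\theta-B\cos\theta$ and verifies $v'=\tau u$ by a single differentiation, recovering (\ref{eqKappaTauODE}) and hence, by the previous paragraph, that $\gamma$ lies on the sphere of radius $\sqrt{A^2+B^2}$ about the now constant point $c$.

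I expect no serious obstacle: the argument is entirely a matter of differentiating at most four times and reading off coefficients in the Frenet frame. The only points needing genuine care are \textbf{(i)} confirming that $C^4$ regularity together with the nonvanishing of $\kappa$ and $\tau$ is exactly what legitimizes every quotient and every derivative that appears, and \textbf{(ii)} organizing the proof around the auxiliary curve $c(s)$, so that both directions of the equivalence — as well as the value of the radius — fall out of one computation rather than being argued separately.
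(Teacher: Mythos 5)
Your argument is correct, and it fills a genuine gap in the exposition: the paper does not prove this theorem at all, but simply quotes it from Breuer--Gottlieb and Wong, so there is no ``paper proof'' to compare against. Your route — reading off $\langle m,T\rangle=0$, $\langle m,N\rangle=-1/\kappa$, $\langle m,B\rangle=\kappa'/(\kappa^2\tau)$ from the Frenet equations, packaging both directions into the constancy of the candidate center $c(s)=\gamma+\tfrac1\kappa N-\tfrac{\kappa'}{\kappa^2\tau}B$, and then integrating (\ref{eqKappaTauODE}) via $(u+iv)'=i\tau(u+iv)$ — is the standard and efficient one, and the computations check out against the sign conventions of (\ref{eqFrenetStandard}). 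The one caveat worth recording is that your proof, as organized, requires $\tau$ (and $\kappa$) to be nowhere zero: the quantity $\kappa'/(\kappa^2\tau)$ and the substitution $v=-u'/\tau$ divide by $\tau$, whereas the whole point of Wong's formulation (\ref{eqWong}) is that it involves no such division and characterizes spherical curves even when $\tau$ has zeros (e.g.\ a circle, with $\tau\equiv0$). In the converse direction you can dodge this entirely by defining $v=A\sin\theta-B\cos\theta$ directly and checking $c'=(1-u\kappa)T+(u'+\tau v)N+(u\tau-v')B=0$, which uses no quotient by $\tau$; the forward direction in full generality needs the extra care Wong supplies. For the use made of the theorem in this paper — curves of constant nonzero torsion, where $\kappa(s)=\sec(\tau s)$ is extracted from (\ref{eqWong}) — your standing nonvanishing assumption is harmless, so the proof is adequate for the context, but the restriction should be stated if the theorem is to be quoted in the generality given.
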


\subsection{The case of constant torsion on the sphere}

In what follows we will take the torsion $\tau$ of the spherical curve $\gamma$ to be constant and the radius of the sphere to be one, centered at the origin.
We also fix the position $\gamma(0)$ of the zero of the arclength parameter so that the curvature of $\gamma$ is seen from (\ref{eqWong}) to be given by
$$ 
\kappa(s) = \sec (\tau s).
$$
The maximum arclength that $\gamma$ can have in this case is $\pi/|\tau|$, and $\kappa(s) \rightarrow \pm \infty$ as $s \rightarrow \pm \pi/(2\,|\tau|)$.
It follows that there can be no smooth, regular closed curve of constant torsion since the curvature function of such a curve would necessarily be smooth and periodic.

We would now like to give an explicit parametrization for curves of constant torsion on the sphere.
One approach would be to integrate the Frenet equations (\ref{eqFrenetMatrixForm}), which would require two quadratures.
We can eliminate one quadrature by using another moving frame adapted to the surface of the sphere.
More precisely, consider the \emph{Darboux frame} $\hat{F} = ({\bf t, u, \nu})$, where ${\bf t} = T$, $\nu$ is the surface normal restricted to $\gamma$, and ${\bf u} = \nu\times {\bf t}$ is the tangent normal of the curve (cf. \cite{guggenheimer, spivak3}).
Since $\nu =  \gamma$ on the unit sphere, we obtain $\gamma$ with one quadrature by integrating the following differential equation.
\begin{equation}\label{eqSphericalDarboux}
\hat{F}^{\prime} = \hat{F}\,\hat{A}, \qquad
\hat{A} =
	\begin{pmatrix}
		0			&	-\kappa_g	&	-1 \\
		\kappa_g 	&	0			&	0\\
		1			&	0			&	0
	\end{pmatrix}
\end{equation}
Here $\kappa_g$ denotes the geodesic curvature, which in our case is $\kappa_g=\tan (\tau s)$.
To integrate this equation, we first observe that, since the flow preserves the length of the rows or columns of $\hat{F}$, we may restrict our view to the unit sphere.
Following {\sc darboux} \cite{darboux} and {\sc eisenhart} \cite{eisenhart}, we stereographically project the sphere onto the plane and find that (\ref{eqSphericalDarboux}) is transformed into the Riccati equation
$$
\frac{d\theta}{ds} = \frac{i}{2} - i \kappa_g \theta -\frac{i}{2}\theta^2.
$$
Setting $\theta(s) = \psi(s) -\tan (\tau s)$, we find that $\psi$ satisfies
$$
\psi^{\prime} = \left(\frac{i}{2} + \tau \right) \sec^2 (\tau s) - \frac{i}{2} \psi^2.
$$
We now change the independent variable by $s=2u/i$ and find that 
$$
\frac{d\psi}{du} = (1-2i\tau) \sec^2(2i\tau u) - \psi^2.
$$
The substitution $\psi = \phi_u/\phi $ then produces 
$$ 
\phi^{\prime\prime} = (1-2i\tau)\sec^2(2i\tau u) \, \phi,
$$
and another change of independent variable $a = \tan(2i \tau u) $ yields
$$
(1+a^2)\frac{d^2\phi}{da^2} + 2a\frac{d\phi}{da} + \frac{1-2i\tau}{4\tau^2} \phi = 0.
$$
The solution to this equation is a linear combination of Legendre functions\footnote{Recall that the general solution to the equation $(1+x^2)y^{\prime\prime}+2xy^{\prime} +ky=0$ for constant $k=\lambda(\lambda+1)$ and $y=y(x)$ is $y =  c_1P_{\lambda}(
(\sqrt{1-4k}-1)/2, ix) + c_2 Q_\lambda( (\sqrt{1-4k}-1)/2, ix) $.}.
This allows us to find an explicit closed form solution to the original differential equation for a spherical curve of constant torsion.
An example of a solution curve, rendered in (heavy) black, is shown in Figure \ref{gSphericalCurve}.

It is perhaps surprising that we have replaced a real linear differential equation of order three by a complex differential equation of order two, without ever really seeming to integrate anything.
The explanation for this is as follows: the third order real differential equation describes a curve in the rotation group for the standard action of $\SO(3)$ on $\R^3$.
However, we may realize this action in another way: the linear action of $\U(2)$ on $\C^2$ preserves the unit sphere $S^3\in \C^2$, and the action of the subgroup $\SU(2)$ passes to the quotient $S^2$ of the sphere $S^3$ by $\U(1)$ (the Hopf map). 
This action passes to the quotient $\SU(2)/\pm = \PSU(2)\approx \SO(3)$ and is realized as transformations on the complex plane by linear fractional transformations.

Perhaps of more interest is that we now can (in principle) derive an analytic expression for the central angle $\zeta(\tau)$ between the two limit points of a curve with torsion $\tau$.
(The formula would be complex.)
It is of interest to note that the limiting value of $\zeta$ as $\tau\rightarrow 0$ is $\pi$.
A plot of the central angle versus the torsion is shown in Figure \ref{gZetaTPlot}.
Note that the orange (light) curve in Figure \ref{gSphericalCurve} is the trace of the limit points of $\gamma$ as $\tau$ varies.
The initial position and frame for $\gamma$ are the same for all values of $\tau$.

\begin{figure}[htb]
\setlength\fboxsep{0pt}
\setlength\fboxrule{0pt}
\fbox{
    \includegraphics[trim = 30mm 30mm 30mm 30mm, clip, scale=0.75]
        {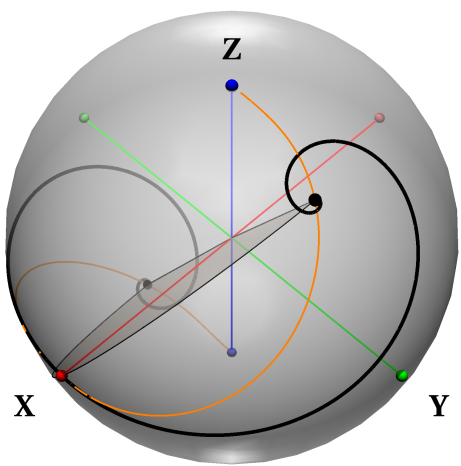}
}
\caption{
A curve of constant torsion \( \tau = 1/2 \) on the unit sphere.
}\label{gSphericalCurve}
\end{figure}

\begin{figure}[htb]
\setlength\fboxsep{0pt}
\setlength\fboxrule{0pt}
\fbox{
    \includegraphics[trim = 0mm 0mm 0mm 0mm, clip, scale=0.5]
        {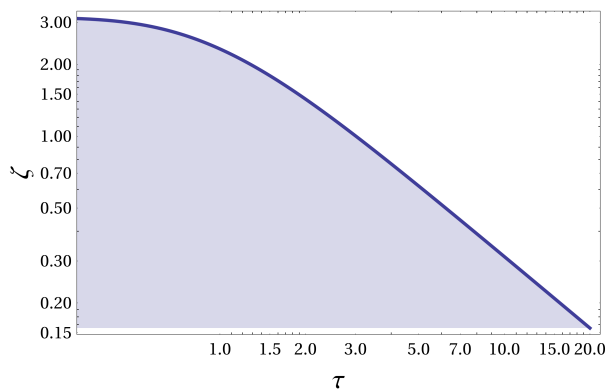}
}
\caption{
This log-log plot of \( \zeta \) versus \( \tau \) suggests that
 \( \zeta(\tau) \approx c / \tau \) for large \( \tau \) and for some
positive constant \( c \).
Note also that $\zeta \rightarrow \pi$ as $\tau \rightarrow 0$.
}\label{gZetaTPlot}
\end{figure}

\subsection{Closed curves of constant torsion on ovaloids}

First, we would like to point out that curves of constant torsion exist at least locally on arbitrary surfaces.
They arise as solutions to the Darboux equations discussed below with the added condition (\ref{eqTau_g}).
The result is a system of algebraic differential equations that will be discussed more fully in \cite{bates-melko}. 

We have seen that there is no closed curve of nonzero constant torsion on the sphere.
There are extensions of this result in a couple of different directions.
As a first example, there is the following.
\begin{theorem}\label{tSimpleCurve}
A simple closed curve that lies on the boundary of its convex hull, and without points of zero curvature, has at least four points where the torsion vanishes.
\end{theorem}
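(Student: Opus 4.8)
The statement is, in essence, {\sc barner}'s theorem on the minimal number of stationary osculating planes of a closed convex space curve, and the plan is to prove it by transplanting the proof of the classical four vertex theorem for plane curves into $\R^3$. First a cheap reduction. If $\tau$ vanishes on a whole interval there is nothing to prove, so we may assume its zeros are isolated; and, since $\kappa$ never vanishes, a zero of $\tau$ at $\gamma(s_0)$ is precisely a point where the osculating plane of $\gamma$ is stationary, i.e.\ where $\gamma$ has contact of order $\geq 3$ with its osculating plane. After a routine smoothing, which involves no essentially new idea, we may also assume that $\gamma$ is $C^\infty$ and lies on a smooth ovaloid $\Sigma$ (any curve on an ovaloid automatically lies on the boundary of its own convex hull); extending the conclusion to a curve on the boundary of an arbitrary convex hull requires further technical care, which we do not address here.

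Next I would set up the analogue of the support function using the frame adapted to $\Sigma$. For each $s$, let $\nu(s)$ be the outward unit normal to $\Sigma$ at $\gamma(s)$, which is also the normal of the supporting plane of $\mathrm{conv}\,\gamma$ there. This plane contains the tangent line, so $\nu(s)$ lies in the normal plane of $\gamma$; writing $\nu = -\cos\phi\,N + \sin\phi\,B$, the fact that $\gamma$ lies locally on one side of the plane forces $\langle N,\nu\rangle = -\cos\phi \leq 0$, and since $\Sigma$ is an ovaloid its normal curvature along $\gamma$ never vanishes, so in fact $\phi(s) \in (-\tfrac{\pi}{2},\tfrac{\pi}{2})$. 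Differentiating $\nu$ and using the Frenet equations gives
\[
\nu' \;=\; \kappa\cos\phi\,T \;+\; (\phi' - \tau)\,(\sin\phi\,N + \cos\phi\,B);
\]
comparing the $T$- and $\mathbf{u}$-components, where $\mathbf{u} := \nu\times T = \sin\phi\,N + \cos\phi\,B$, with the Darboux equation $\nu' = -\kappa_n\,T - \tau_g\,\mathbf{u}$ yields $\kappa_n = -\kappa\cos\phi$ and the key identity $\tau = \tau_g + \phi'$. Since $\phi$ is periodic and confined to an interval of length $\pi$, we get $\oint_\gamma \tau\,ds = \oint_\gamma \tau_g\,ds$; on the sphere $\tau_g \equiv 0$, so $\oint\tau\,ds = 0$, which already forces at least two zeros of $\tau$ and forbids nonzero constant torsion --- the analogue of the easy half of the four vertex theorem --- although on a general ovaloid even this weak bound does not follow from an integral identity alone.

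The serious step, the passage from two zeros to four, I would carry out by contradiction, in parallel with the full four vertex theorem. Suppose $\tau$ changes sign only at $\gamma(s_1)$ and $\gamma(s_2)$, so that $\tau \geq 0$ on one of the two subarcs and $\tau \leq 0$ on the other. In the plane one finishes by taking the chord joining the two vertices, choosing the affine function $\ell$ that vanishes on it, and noting that $\oint \kappa'\,\ell\,ds$ is simultaneously nonzero --- since $\kappa'$ has a fixed sign on each side of the chord while $\ell$ changes sign across it --- and zero, after an integration by parts that uses $\oint \kappa\,T\,ds = 0$. The obstruction to copying this verbatim is that in $\R^3$ there is no single plane or line playing the role of the chord: one needs an auxiliary plane adapted simultaneously to the two stationary osculating planes and to the global supporting-plane structure of $\gamma$, and manufacturing it --- typically by exhibiting a ``double supporting plane'' of $\gamma$ between consecutive flattenings and using convexity to fix the relevant signs --- is exactly Barner's contribution. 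I expect this to be the main obstacle, and it is where the hypothesis that $\gamma$ lie on the boundary of its convex hull is genuinely used. A more modern route, due to {\sc sedykh}, instead reads the flattenings of $\gamma$ as cusps of the dual wave front on $S^2$ and deduces the bound from a four-cusp theorem for spherical fronts.
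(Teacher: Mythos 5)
The paper does not actually prove Theorem \ref{tSimpleCurve}: it is quoted from the literature, with the proof attributed to Bisztriczky, to Nu\~no-Ballesteros and Romero-Fuster, and to Sedykh. So the comparison is really between your sketch and those external arguments, and judged as a proof your proposal has a genuine gap: the only nontrivial content of the statement --- four flattenings rather than two --- is exactly the step you leave out. Your Darboux-frame bookkeeping ($\nu$ in the normal plane, $\kappa_n=-\kappa\cos\phi$, $\tau=\tau_g+\phi'$, which is the paper's (\ref{eqTau_g}) up to the orientation of $\phi$) and the identification of zeros of $\tau$ with stationary osculating planes when $\kappa\neq 0$ are correct, and the integral identity does give two sign changes of $\tau$ on the sphere; but for the passage from two to four you simply name Barner's ``double supporting plane'' construction and Sedykh's four-cusp theorem for the dual front. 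Invoking those results is invoking the theorem itself (Sedykh's theorem \emph{is} the statement being proved), so nothing beyond the easy half has been established.

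There is also a problem with your preliminary reduction. A simple closed curve lying on the boundary of its convex hull need not lie on, nor be a limit of curves lying on, a smooth ovaloid: the boundary of the hull of a space curve generically contains developable and planar pieces, supporting planes may touch the curve along two arcs or at two points, and it is precisely this degenerate supporting-plane structure that makes the four-flattening theorem delicate (and is why Sedykh's proof, and the versions with singular points cited in the paper, require real work). Declaring this reduction ``routine smoothing'' discards the generality in the hypothesis rather than handling it; conversely, restricting to curves on an ovaloid would prove a strictly weaker statement than the one asserted. So the proposal should be regarded as a plausible outline of where a proof lives, not a proof: the convexity mechanism that forces the third and fourth zeros is the missing idea.
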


This theorem may be found in \cite{bisztriczky, nuno_ballesteros-romero_fuster, sedykh94}.  There are even versions with singular points \cite{romero_fuster-sedykh95}.  In particular, these imply that there is no simple closed curve of constant nonzero torsion on the surface of an ovaloid.  However, it seems that it is still unknown whether or not one can find a closed curve of nonzero constant torsion on the surface of an ovaloid if the curve is not simple.  We formalize this as

\begin{conjecture}
There are no closed curves of nonzero constant torsion on an ovaloid.
\end{conjecture}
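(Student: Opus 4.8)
The plan is to reduce the conjecture to the case of a closed curve with self-intersections, and then to attack that case through the Darboux equations. I would begin with two elementary observations. Since an ovaloid $M$ is compact and has definite second fundamental form, the minimum of $|\kappa_n|$ over the unit tangent bundle of $M$ is a positive constant $c$; hence every curve on $M$, parametrized by arclength, satisfies $\kappa \ge |\kappa_n| \ge c > 0$, where $\kappa_n$ is the normal curvature. In particular a closed curve of constant torsion on $M$ has nowhere-vanishing curvature. Moreover, $M$ is the boundary of a compact convex body $K$, and a supporting hyperplane of $K$ at a point of $\gamma$ also supports the convex hull of $\gamma$ at that point, so $\gamma$ lies on the boundary of its own convex hull. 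Consequently, if $\gamma$ is \emph{simple}, Theorem~\ref{tSimpleCurve} already rules it out: a curve of constant nonzero torsion has no point where the torsion vanishes, whereas the theorem requires at least four. So the whole difficulty is concentrated in non-simple curves.

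For a non-simple $\gamma$ I would pass to the Darboux frame $\hat F = (\mathbf{t},\mathbf{u},\nu)$ adapted to $M$ and use the constant-torsion form of the Darboux equations, namely condition~(\ref{eqTau_g}). Writing $\phi$ for the angle in the normal plane of $\gamma$ from the surface normal $\nu$ to the principal normal $N$, one has the classical relations $\kappa_n = \kappa\cos\phi$ and $\tau = \tau_g + \phi'$, where $\tau_g$ is the geodesic torsion. Because $|\kappa_n|$ is bounded away from zero and $\kappa>0$ on an ovaloid, $\phi$ never reaches an odd multiple of $\pi/2$; since $\kappa>0$ also makes the Frenet frame globally defined along the closed curve, $\phi$ returns to its initial value and $\oint_\gamma\phi'\,\dee s = 0$. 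Integrating $\tau = \tau_g + \phi'$ over the period therefore yields the identity $\tau L = \oint_\gamma \tau_g\,\dee s$, where $L$ is the length of $\gamma$. This can be combined with a topological fact: for any closed curve on the boundary of a convex body, the framing by the surface normal has vanishing self-linking number, because coning $\gamma$ to an interior point of $K$ gives a Seifert surface that misses the outward push-off of $\gamma$. By the C\u{a}lug\u{a}reanu--White formula this forces $\oint_\gamma \tau_g\,\dee s = -2\pi\,\mathrm{Wr}(\gamma)$, so the writhe of $\gamma$ is pinned to $-\tau L/(2\pi)$ and the Frenet self-linking number of $\gamma$ is zero. (Equivalently, by Proposition~\ref{pKappaBgKappa} the binormal of $\gamma$ is a closed, \emph{locally convex} spherical curve, and one may instead ask which such curves have integral representative (\ref{eqIntegralRep}) lying on an ovaloid.)

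The decisive step is then to derive a contradiction from this package of data --- $\kappa>0$, constant nonzero torsion, vanishing Frenet self-linking, and the algebraic-differential system coming from the Darboux equations and (\ref{eqTau_g}) on a surface with definite second fundamental form. Two routes look natural. The first is to establish an ovaloid analogue of Saban's identities (\ref{eqIntegralIdentities}): on the sphere $\oint_\gamma \kappa^n\tau\,\dee s = 0$ for every closed curve, which by itself makes constant torsion impossible, and the aim would be to produce from $\kappa^2 = \kappa_g^2 + \kappa_n^2$ and the positivity of $|\kappa_n|$ a weighted relation $\tau\oint_\gamma w\,\dee s = 0$ with $w>0$, forcing $\tau = 0$. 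The second is to treat the Darboux system together with (\ref{eqTau_g}) as a flow on the four-dimensional region $UM \times (-\pi/2,\pi/2)$, with $\tau$ a fixed parameter, and to exclude periodic orbits for $\tau\neq 0$ by exhibiting a quantity that is strictly monotone along solutions --- the role played in the spherical case by $\kappa(s) = \sec(\tau s)$, whose blow-up already bounds the arclength of any such arc by $\pi/(2|\tau|)$.

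I expect the non-simple case to be the real obstacle. Theorem~\ref{tSimpleCurve} and the related four-vertex and torsion theorems rest on the Jordan curve theorem, and a self-intersecting closed curve on $\partial K$ lacks the corresponding rigidity: it may wind repeatedly around $K$ and make long excursions whose interiors contribute nothing to a vertex count, so the usual arguments break down. At the same time, the differential equation (\ref{eqKappaTauODE}) that makes the spherical case so transparent has no known counterpart on a general ovaloid, so the monotone quantity required by the second route would have to be discovered essentially from scratch. Producing either the ovaloid analogue of (\ref{eqIntegralIdentities}) or the nonexistence of periodic orbits for the Darboux system is, as far as we can see, tantamount to proving the conjecture itself.
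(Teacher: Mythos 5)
You have not proved the statement, and neither does the paper: this is precisely the authors' open conjecture, stated after they observe that the known results settle only the \emph{simple} case. The parts of your proposal that are solid are exactly the parts the paper already records as partial evidence. Your reduction of the simple case --- $\kappa \ge |\kappa_n| \ge c > 0$ on a strictly convex compact surface, $\gamma$ lying on the boundary of its convex hull, then Theorem \ref{tSimpleCurve} --- is the same observation the authors make when they note that the four-vertex-type theorems exclude simple closed curves of constant nonzero torsion on an ovaloid. Likewise your identity $\oint_\gamma \tau_g\,\dee s = \tau L$ is one of the integral constraints the paper lists (obtained by integrating (\ref{eqTau_g})), and the paper's own additional evidence, the bound $|\tau| < \mu/2$ of Theorem \ref{tTauBound} and the Saban identities (\ref{eqIntegralIdentities}) pulled back through the Gauss map, plays the same ``constraint package'' role you are aiming at. But the decisive step --- deriving a contradiction for a \emph{non-simple} closed curve from this package --- is exactly what is missing, and you concede as much in your final sentence: either of your two proposed routes (an ovaloid analogue of (\ref{eqIntegralIdentities}) with a positive weight, or a monotone quantity excluding periodic orbits of the Darboux system) is the conjecture itself, not a proof of it.

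One further caution about an intermediate claim you treat as established: the C\u{a}lug\u{a}reanu--White argument (vanishing self-linking of the surface-normal framing via the cone to an interior point, hence $\mathrm{Wr}(\gamma) = -\tau L/(2\pi)$ and vanishing Frenet self-linking) is only secure for embedded curves. For a non-simple $\gamma$ the cone is not an embedded Seifert surface, the self-linking number is not an isotopy-invariant integer in the usual sense, and the Lk $=$ Wr $+$ Tw decomposition needs a separate justification; since the non-simple case is the only case at issue, this step cannot be used as stated. In short, the proposal is a reasonable research program that closely parallels the paper's own discussion, but it contains no proof of the conjecture, which remains open.
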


In support of this conjecture, if there are such curves, their torsion can not be too large, as we have the upper bound given in Theorem \ref{tTauBound}.
Before proving this theorem, we recall some fundamental properties of curves on surfaces.
Details may be found in \cite{spivak3}.

For the sake of simplicity, we suppose that $M$ is a surface with only isolated umbilics and that $(\xi_1, \xi_2)$ are unit vector fields tangent to the principal directions in $M$.
For a curve $\gamma$ in $M$, we define $\vartheta$ to be the directed angle from $\xi_1$ to the unit tangent $T$ of $\gamma$, and we define $\varphi$ to be the directed angle from the principal normal $N$ of $\gamma$ to the surface normal $\nu$.
As in the spherical case, we define the Darboux frame to be $\hat{F} = ({\bf t}, {\bf u}, \nu)$, where ${\bf t} = T$, $\nu$ is the surface normal restricted to $\gamma$, and ${\bf u} = \nu \times {\bf t}$ is the tangent normal of the curve.
In this general context, the Darboux equations become
$$
\gamma^\prime = {\bf t}, \qquad
\hat{F}^{\prime} = \hat{F}\,\hat{A}, \qquad
\hat{A} =
	\begin{pmatrix} 0 & -\kappa_g & -\kappa_n \\
		\kappa_g & 0 &  -\tau_g\\
		\kappa_n & \tau_g &  0
	\end{pmatrix},
$$
where $\kappa_g$, $\kappa_n$, and $\tau_g$ are, respectively, the geodesic curvature, the normal curvature, and the geodesic torsion of $\gamma$.
Note that the Frenet and Darboux frames are related by the change of gauge that rotates the normal plane of $\gamma$ through an angle of $\varphi - \pi / 2$, taking $N$ to ${\bf u}$ and $B$ to $\nu$.
The geodesic torsion $\tau_g$ is related to the principal curvatures $k_1$ and $k_2$ and the torsion $\tau$ by the equations
\begin{equation}\label{eqTau_g}
\tau_g 
	=  \tau + \varphi^{\prime}
	= \frac{1}{2} (\kappa_2 -\kappa_1) \sin\vartheta.
\end{equation}

\begin{theorem}\label{tTauBound}
Let $\mu$ be the maximum difference of the two principal curvatures on a compact surface $M$.
Then a closed curve $\gamma$ of constant torsion $\tau$ on $M$ satisfies
\begin{equation*}\label{eqTauMu}
 |\tau| < \frac{\mu}{2}.
\end{equation*}
\end{theorem}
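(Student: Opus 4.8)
The plan is to integrate the geodesic torsion identity (\ref{eqTau_g}) around the closed curve. Write $L$ for the length of $\gamma$. The second equality in (\ref{eqTau_g}) gives, at each point, the pointwise bound $|\tau_g| = \frac{1}{2}|\kappa_2 - \kappa_1|\,|\sin\vartheta| \le \frac{\mu}{2}$, since the angular factor has absolute value at most one and $\mu$ is, by definition, the maximum of $|\kappa_2 - \kappa_1|$ over the compact surface $M$. The first equality in (\ref{eqTau_g}) rearranges to $\varphi' = \tau_g - \tau$, so everything hinges on showing $\int_\gamma \varphi'\,\dee s = 0$.

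For that step I would note that, since $\kappa$ is admissible, the Frenet frame (hence $N$) returns to its initial value as $s$ runs over $[0,L]$, and the surface normal $\nu$ does too, so the directed angle $\varphi$ from $N$ to $\nu$ is well-defined modulo $2\pi$ and returns to its initial value; what must be argued is that $\varphi$ does not wind, i.e.\ that $\int_\gamma \varphi'\,\dee s$ is $0$ rather than a nonzero multiple of $2\pi$. On the ovaloids that motivate the result this is immediate, since there $\kappa_n = \kappa\cos\varphi > 0$ confines $\varphi$ to $(-\pi/2,\pi/2)$. Granting $\int_\gamma \varphi'\,\dee s = 0$, integration of $\tau = \tau_g - \varphi'$ over $\gamma$ yields $\tau L = \int_\gamma \tau_g\,\dee s$, and the pointwise bound gives $|\tau|\,L = \left|\int_\gamma \tau_g\,\dee s\right| \le \int_\gamma |\tau_g|\,\dee s \le \frac{\mu}{2}\,L$, so $|\tau| \le \frac{\mu}{2}$.

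To promote this to the strict inequality, I would examine the case $|\tau| = \mu/2$. Equality throughout forces $\tau_g$ to have constant sign with $|\tau_g| \equiv \mu/2$ along $\gamma$; combined with $\int_\gamma \varphi'\,\dee s = 0$ and $\varphi' = \tau_g - \tau$ of one sign, this forces $\varphi' \equiv 0$ and $|\kappa_2 - \kappa_1| \equiv \mu$ along $\gamma$. Then $\gamma$ lies entirely in the locus where the principal-curvature difference attains its global maximum — where its gradient vanishes — with the angular factor in (\ref{eqTau_g}) pinned at an extreme value; one rules this out for a closed regular curve, e.g.\ by substituting these constraints into the Codazzi--Mainardi equations. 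I expect the main obstacle to be precisely this exclusion of the boundary case $|\tau| = \mu/2$, with a secondary difficulty in making the ``no winding of $\varphi$'' claim rigorous on a general compact surface; the pointwise estimate and the integration itself are routine.
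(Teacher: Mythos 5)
Your route is genuinely different from the paper's, and it stalls exactly where you predict. The paper never integrates anything: writing $\varphi^{\prime} = \frac{1}{2}(\kappa_2-\kappa_1)\sin\vartheta - \tau$ from (\ref{eqTau_g}), it observes that if $\tau \geq \mu/2$ then $\varphi^{\prime} \leq 0$ everywhere (and if $\tau \leq -\mu/2$ then $\varphi^{\prime} \geq 0$), so $\varphi$ would be monotone along the closed curve, which is declared incompatible with the periodicity of $\varphi$; the borderline value $|\tau| = \mu/2$ is thus excluded in the same stroke as everything larger. Your averaged version can structurally deliver only the non-strict bound $|\tau| \leq \mu/2$, while the statement to be proved is strict. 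The rigidity analysis you sketch for equality (constant-sign $\tau_g$ with $|\tau_g| \equiv \mu/2$, hence $\gamma$ confined to the maximum locus of $\kappa_2 - \kappa_1$ with the angle factor pinned, then an appeal to Codazzi--Mainardi) is not carried out and is not routine; as written, your proposal proves a weaker theorem. The pointwise bound $|\tau_g| \leq \mu/2$ and the integration step themselves are fine.

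The second gap is the claim $\int_\gamma \varphi^{\prime}\,\dee s = 0$. You justify it only on ovaloids (via $\kappa_n = \kappa\cos\varphi > 0$, which additionally presumes $\kappa > 0$ along $\gamma$), whereas the theorem is stated for an arbitrary compact surface; if the relative angle winds, say $\int_\gamma \varphi^{\prime}\,\dee s = 2\pi k$ with $k \neq 0$, your estimate degenerates to $|\tau L - 2\pi k| \leq \mu L/2$, which bounds nothing. To be fair, the paper's contradiction likewise rests on $\varphi$ being honestly periodic rather than periodic only mod $2\pi$, so this subtlety is shared by both arguments rather than being a defect of yours alone; but the paper's pointwise sign argument needs no separate treatment of the equality case and no exact integral identity, which is precisely what makes it shorter and what your integral formulation still owes: (i) the zero-winding statement in the generality claimed, and (ii) a complete exclusion of $|\tau| = \mu/2$.
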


\begin{proof}
By virtue of (\ref{eqTau_g}), we have
\begin{equation*}
\varphi^{\prime}
	= \frac{1}{2} (\kappa_2 -\kappa_1) \sin\vartheta - \tau .
\end{equation*}
Hence, if $\tau \geq \mu / 2$, we must have that $\varphi^{\prime} \leq 0$ and therefore that $\varphi$ is a monotonically decreasing function.
Similarly, if $\tau \leq - \mu / 2$, then $\varphi$ must be increasing.
However, this is not possible, since $\varphi$ must be periodic.   
\end{proof}

Suppose, now, that $M$ is an ovaloid.
An immediate consequence of Theorem \ref{tTauBound} is that, if $M$ is nearly spherical, $\mu$ will be small, and hence any closed curve of nonzero constant torsion must be nearly planar.
However, Theorem \ref{tSimpleCurve} makes it seem unlikely that such curves exist.
In the limiting case when $\mu \rightarrow 0$, $M$ becomes a sphere, and the corresponding curves would become circles.

In addition to this, there are infinitely many integral constraints that a closed curve of constant torsion on any surface must satisfy.
First of all, integrating (\ref{eqTau_g}) once around the curve yields
$$
\int_\gamma \tau_g \, ds = \tau L,
$$
where $L$ is the arclength of the curve.
Furthermore, suppose that  $\delta$ is the spherical image of the curve $\gamma$ under the Gauss map of the surface (i.e. $\delta = \nu \circ \gamma$).
Then, by (\ref{eqIntegralIdentities}), we have
$$
\int_\delta k_{\delta}^n\tau_{\delta}\,d\bar{s} = 0,\qquad
	n = 0,\pm1,\pm2,\dots
$$
where $k_{\delta}$ and $\tau_{\delta}$ denote the curvature and torsion of $\delta$, respectively, and $\bar{s}$ is the arclength parameter of $\delta$.
We can pull all of these constraints back via the map $\nu^*$ to the surface $M$ to realize them as constraints on our curve $\gamma$.
It seems plausible that these constraints will force $\tau$ to be zero if we add the condition that $M$ is an ovaloid, but we have not yet managed to do this.
Finally, we point out that we do believe closed curves of nonzero constant torsion exist on surfaces with regions of negative Gaussian curvature, such as the standard torus in $\R^3$, and we hope to provide examples in our forthcoming paper \cite{bates-melko}.

\section{Notes}

As we were finishing this work, two articles came to our attention that are
closely related to this paper.

\begin{enumerate}
\item In a brief note in 1977, {\sc weiner} \cite{weiner} has a similar
discussion of a closed curve of constant torsion with positive curvature, but he does not provide a picture.
We feel that our more general approach is worthy of consideration, and we point out that Conjecture \ref{cNoPositiveKnots}, to the best of our knowledge, is new.
What is remarkable here is that we were both inspired by the identical planar curve!

\item A preprint by {\sc kazaras} and {\sc sterling} \cite{kazaras-sterling} 
gives a different characterization for a spherical curve of constant torsion than ours.
The difference here is that we wind up with rational functions of Legendre
functions, and they wind up with power series of hypergeometric functions.  
\end{enumerate}

\bibliography{curves_of_constant_torsion.bbl}

\bibliographystyle{plain}

\vspace{20pt}

\begin{tabular}{@{}p{2.5in}p{4in}}
Larry M. Bates						&	O. Michael Melko\\
Department of Mathematics           &	Dorian Apartments \# 109\\	
University of Calgary               &	19 7th Ave. S.E.\\
Calgary, Alberta					&	Aberdeen, SD 57401\\
Canada T2N 1N4      				&   U.S.A.\\
bates@ucalgary.ca                   &   mike.melko@gmail.com\\
\end{tabular}

\end{document}